
\documentclass[draft,oneside]{article}

\usepackage{amsxtra,amssymb,amsmath,amsthm,amsbsy,enumerate}
\usepackage[french,british]{babel}
\usepackage[all]{xy}


\setlength{\hoffset}{0cm}
\setlength{\textwidth}{14cm}	\setlength{\topmargin}{0.5cm}
\setlength{\oddsidemargin}{1.3cm}	\setlength{\evensidemargin}{1.3cm}
\setlength{\textheight}{20cm}	\setlength{\parindent}{0.4cm}




\theoremstyle{definition}
\newtheorem{theorem}{Theorem}

\newtheorem{example}[theorem]{Example}
\newtheorem{remark}[theorem]{Remark}
\newtheorem{lemma}[theorem]{Lemma}


\def\then{\Rightarrow}
\def\R{\mathbb R}
\def\<{\langle}
\def\>{\rangle}

\def\d{{\rm d}}

\def\N{{\mathbb N}}
\def\Z{{\mathbb Z}}


\title{
\vspace{-2cm}
\bf Complete connections on fiber bundles}

\author{Matias del Hoyo}

\date{}

\pagestyle{myheadings}
\markright{{\footnotesize M. del Hoyo -- Complete connections on fiber bundles}}


\begin{document}

\maketitle

\vspace{-15pt}
\centerline{\it IMPA, Rio de Janeiro, Brazil.}

\vspace{20pt}


\begin{abstract}
Every smooth fiber bundle admits a complete (Ehresmann) connection. 
This result appears in several references, with a proof on which we have found a gap, that does not seem possible to remedy.
In this note we provide a definite proof for this fact, explain the problem with the previous one, and illustrate with examples. We also establish a version of the theorem involving Riemannian submersions.
\end{abstract}



%



\section{Introduction: A rather tricky exercise}


An {\it (Ehresmann) connection} on a submersion $p:E\to B$ is a smooth distribution $H\subset TE$ that is complementary to the kernel of the differential, namely $TE=H\oplus\ker\d p$. 
The distributions $H$ and $\ker\d p$ are called {\it horizontal} and {\it vertical}, respectively, and a curve on $E$ is called horizontal (resp. vertical) if its speed only takes values in $H$ (resp. $\ker\d p$).
Every submersion admits a connection: we can take for instance a Riemannian metric $\eta^E$ on $E$ and set $H$ as the distribution orthogonal to the fibers.


Given $p:E\to B$ a submersion and $H\subset TE$ a connection, a smooth curve $\gamma:I\to B$, $t_0\in I$, locally defines a {\it horizontal lift} $\tilde\gamma_e:J\to E$, $t_0\in J\subset I$, $\tilde\gamma_e(t_0)=e$, for $e$ an arbitrary point in the fiber.
This lift is unique if we require $J$ to be maximal, and depends smoothly on $e$.
The connection $H$ is said to be {\it complete} if for every $\gamma$ its horizontal lifts can be defined in the whole domain. In that case, a curve $\gamma$ induces diffeomorphisms between the fibers by {\it parallel transport}. See e.g. \cite{M3} for further details.


The purpose of this article is to show that, when $B$ is connected, a submersion $p:E\to B$ admits a complete connection if and only if $p$ is a {\it fiber bundle}, namely if there are {\it local trivializations} $\phi_i:p^{-1}(U_i)\to U_i\times F$, $\pi_1\phi_i=p$. 
One implication is easy: if $H$ is a complete connection,
working locally, we can assume $U_i$ is a ball in $\R^n$, and define $\phi^{-1}_i:U_i\times p^{-1}(0)\to p^{-1}(U_i)$ by performing parallel transport along radial segments, obtaining a fiber bundle over each component of $B$.
The converse, as we shall see, is definitely more challenging.


As far as we know, this result first appeared in \cite[Cor 2.5]{W}, with a proof that turned out to be incorrect, and then as an exercise in \cite[Ex VII.12]{GHV}. Later it was presented as a theorem in \cite{KMS,M1,M2,M3}, always relying in a second proof, that P. Michor attributed to S. Halperin in \cite{M1}, and that we learnt from \cite{dM}. We have found a gap in that argument, that does not seem possible to remedy. Concretely, it is assumed that fibered metrics are closed under convex combinations. A counterexample for this can be found in \cite[Ex 2.1.3]{dHF}. 
%


In section 2 we prove that every fiber bundle admits a complete connection. Our strategy uses local complete connections and a partition of 1, as done by Michor, but we allow our coefficients to vary along the fibers. We do it in a way so as to make the averaged connection and the local ones to agree in enough horizontal sections, which we show insures completeness. 
In section 3 we discuss fibered complete Riemannian metrics, provide counter-examples to some constructions in the literature, and show that every fiber bundle admits a complete fibered metric,
concluding the triple equivalence originally proposed in \cite{W}.

%


\bigskip


\noindent{\bf Acknowledgement.}
I am grateful to R. Fernandes for the several conversations we had on this and related topics, to H. Bursztyn and M. Dom\'inguez-V\'azquez for their comments and suggestions over a first draft, and to UFSC, Florian\'opolis, Brazil for hosting me while I wrote this note. 
I also thank W. de Melo, S. Halperin and P. Michor for their positive reactions to my inquiries, and to P. Frejlich for helping me to fix a misleading typo.


\section{Our construction of complete connections}

Given $(U_i,\phi_i)$ a local trivialization of $p:E\to B$, there is an {\it induced connection} on $p:p^{-1}(U_i)\to U_i$ defined by $H_i=\d \phi_i^{-1}(TU_i\times 0_F)$, and is complete.
The space of connections inherits a convex structure by identifying each connection $H$ with the corresponding projection onto the vertical component. 
It is tempting then to construct a global complete connection, out of the ones induced by trivializations, by using a partition of 1. The problem is that, as stated in \cite{GHV}, complete connections are not closed under convex combinations.

\begin{example}
Let $p:\R^2\to\R$ be the projection onto the first coordinate, and let $H_1$, $H_2$ be the connections spanned by the following horizontal vector fields:
\def\<{\langle}
\def\>{\rangle}
$$H_1=\<\partial_x + 2y^2\sin^2(y)\partial_y\>
\qquad 
H_2=\<\partial_x + 2y^2\cos^2(y)\partial_y\>$$
Note that the curves $t\mapsto (t,k\pi)$, $k\in\Z$, integrate $H_1$, and because of them, any other horizontal lift of $H_1$ is bounded and cannot go to $\infty$. The same argument applies to $H_2$.
Hence both connections are complete.
However, the averaged connection $\frac1 2(H_1+H_2)$ is spanned by the horizontal vector field $\partial_x+y^2\partial_y$ and is not complete. 
\end{example}


Our strategy to prove that every fiber bundle $p:E\to B$ admits a complete connection is inspired by previous example. 
We will paste the connections induced by local trivializations by using a partition of 1, in a way so as to preserve enough local horizontal sections, that will bound any other horizontal lift of a curve.
Given $U\subset B$ an open, we say that 
a local section $\sigma$ is {\it horizontal} if $\d\sigma$ takes values in $H$, and that
a family of local sections $\{\sigma_k:U\to E\}_k$ is {\it disconnecting} if the components of $p^{-1}(U)\setminus\bigcup_k \sigma_k(U)$ have compact closure in $E$. 

\begin{lemma}
Let $p:E\to B$ be a fiber bundle and let $H$ be a connection.
If every $b\in B$ admits an open $b\in U\subset B$ and a disconnecting family of horizontal sections $\{\sigma_k:U\to E\}_k$, then $H$ has to be complete.
\end{lemma}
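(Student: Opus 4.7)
\medskip

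\noindent\emph{Proof plan.}
My plan is to argue by contradiction: assuming $H$ is not complete, I will use the disconnecting family to confine the offending lift to a compact set and then invoke the standard escape lemma for ODEs to extend it, against maximality. Concretely, suppose there exist $\gamma:I\to B$, $t_0\in I$ and $e\in p^{-1}(\gamma(t_0))$ whose maximal horizontal lift $\tilde\gamma_e:J\to E$ has $J\subsetneq I$. Reversing orientation if necessary, let $t_1\in I\setminus J$ be the right endpoint of $J$, set $b_1=\gamma(t_1)$, and apply the hypothesis to get $U\ni b_1$ and a disconnecting family $\{\sigma_k:U\to E\}_k$ of horizontal sections. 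Shrink $\epsilon>0$ so that $\gamma([t_1-\epsilon,t_1])\subset U$, and note that each $\sigma_k\circ\gamma$ is itself a horizontal lift of $\gamma|_{[t_1-\epsilon,t_1]}$.

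The key step is a dichotomy enforced by uniqueness of horizontal lifts. If $\tilde\gamma_e(t^*)\in\sigma_k(U)$ for some $k$ and some $t^*\in[t_1-\epsilon,t_1)\cap J$, then $\tilde\gamma_e$ and $\sigma_k\circ\gamma$ agree at $t^*$, hence coincide on the overlap of their domains, so $\tilde\gamma_e$ extends to $t_1$ through $\sigma_k\circ\gamma$, contradicting maximality. Otherwise $\tilde\gamma_e([t_1-\epsilon,t_1)\cap J)$ is a connected subset of $p^{-1}(U)\setminus\bigcup_k\sigma_k(U)$, so it lies in a single connected component, whose closure $K\subset E$ is compact by hypothesis. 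The lift is then trapped in $K$ near $t_1$; since horizontal lifts are locally integral curves of a smooth vector field that is bounded on $K\cap p^{-1}(\gamma([t_1-\epsilon,t_1]))$, the escape lemma yields a continuous extension of $\tilde\gamma_e$ up to $t_1$, and local existence of horizontal lifts extends it past $t_1$, again contradicting maximality.

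The substantive ingredient is the dichotomy, where uniqueness pins $\tilde\gamma_e$ to one of the $\sigma_k\circ\gamma$ as soon as they touch, so that the disconnecting property of the family blocks the only remaining alternative; once this is set up, the compactness trap and the extension via the escape lemma are routine ODE bookkeeping, and I would not expect any real obstacle there.
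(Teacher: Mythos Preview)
Your argument is correct and follows essentially the same route as the paper's proof: the dichotomy ``the lift touches some $\sigma_k(U)$, hence coincides with $\sigma_k\circ\gamma$ by uniqueness'' versus ``the lift avoids all $\sigma_k(U)$, hence is trapped in a component with compact closure and extends by the escape lemma'' is exactly the mechanism the paper uses. The only cosmetic difference is packaging: the paper argues directly, fixing a curve $\gamma:[t_0,t_1]\to U$ with compact domain inside a single $U$ and splitting on whether the \emph{initial} lift point lies on some $\sigma_k$, whereas you argue by contradiction, localizing near the failure time $t_1$ and splitting on whether the lift ever meets a $\sigma_k$ there; the two formulations are equivalent since a lift starting off the sections can never cross one.
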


\begin{proof}
Let $U\subset B$ be an open that admits a disconnecting family of horizontal sections $\{\sigma_k:U\to S\}_k$.
Given a curve $\gamma:[t_0,t_1]\to U\subset B$, $\gamma(t_0)=b$, we will show that it can be lifted with arbitrary initial point. This will be enough for $B$ can be covered with opens of this type. 
If we lift the initial point $b$ to a point $\sigma_k(b)$, then we can lift the whole $\gamma$ by using $\sigma_k$. If we lift $b$ to a point not in $\bigcup_k \sigma_k(U)$, then the lifted curve will remain within the same component of $p^{-1}(U)\setminus\bigcup_k \sigma_k(U)$, that is contained in a compact, and therefore by a standard argument we can easily extend it to the whole domain.
\end{proof}


From here on, let us fix a proper positive function $h:F\to \R$, we call it the {\it height} function, it somehow controls the distance to $\infty$. We can take for instance $h(x)=\sum_n n\lambda_n$ where $\{\lambda_n\}_n$ is a locally finite countable partition of 1 by functions of compact support.
Given $(U_i,\phi_i)$ a local trivialization, we define the {\it tube} of radius $n$ over $U_i$ as the set $T_i(n)=\phi_i^{-1}(U_i\times h^{-1}(n))$. Note that if $N_i\subset \N$ is infinite, then $\{\sigma_f:b\mapsto\phi_i^{-1}(b,f)\}_{f\in h^{-1}(N_i)}$ is a disconnecting family of horizontal sections with respect to the induced connection over $U_i$, for they form an infinite union of tubes of unbounded radiuses.


\begin{theorem}
Every fiber bundle $p:E\to B$ admits a complete connection $H$.
\end{theorem}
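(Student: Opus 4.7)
The plan is to apply the preceding lemma, building a global connection $H$ as a fiber-dependent convex combination of the local complete connections coming from the trivializations, arranged so that for each $i$ an infinite family of tubes from $\phi_i$ remains horizontal. I would fix a countable locally finite trivializing cover $\{(U_i,\phi_i)\}_{i\in\N}$ of $B$, with each $\overline{U_i}$ compact and sitting in the domain of a slightly larger trivialization, and denote by $H_i$ the induced complete connection on $p^{-1}(U_i)$ and by $V_i$ its vertical projection. Let $\{\rho_i\}$ be a subordinate partition of unity on $B$.

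The aim is to select, for each $i$, an infinite set $N_i\subset\N$ such that the tubes $T_i(n)=\phi_i^{-1}(U_i\times h^{-1}(n))$ with $n\in N_i$ are horizontal for the global $H$. Then the sections $\sigma_f:b\mapsto\phi_i^{-1}(b,f)$ indexed by $f\in h^{-1}(N_i)$ form a disconnecting horizontal family over $U_i$, and the lemma yields completeness. To realize this, I would construct smooth functions $\tilde\rho_i:E\to[0,1]$, supported in $p^{-1}(U_i)$, with $\sum_i\tilde\rho_i\equiv 1$ and $\tilde\rho_i\equiv 1$ on a neighborhood of each $T_i(n)$ with $n\in N_i$. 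The distribution $H$ whose vertical projection at each point is $\sum_i\tilde\rho_i V_i$ then coincides with $H_i$ along each preserved tube (since there $\tilde\rho_i=1$ and $\tilde\rho_j=0$ for $j\neq i$), making these tubes $H$-horizontal as required.

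The main obstacle is the simultaneous construction of the $N_i$ and the $\tilde\rho_i$: on an overlap $U_i\cap U_j$ the tubes $T_i(n)$ and $T_j(m)$ may intersect, making the conditions $\tilde\rho_i=1$ on $T_i(n)$ and $\tilde\rho_j=1$ on $T_j(m)$ incompatible at such intersections. I would address this recursively. At step $i$, only finitely many $j<i$ satisfy $\overline{U_i}\cap\overline{U_j}\neq\emptyset$, by local finiteness and compactness. For each such $j$ and each $m\in N_j$ fixed at a previous stage, the preserved tube $T_j(m)$, pushed into $\phi_i$-coordinates, has fiber heights lying in a compact subset of $\R$ depending continuously on the base point; using that $h$ is proper and $\phi_i\circ\phi_j^{-1}$ is a fiberwise diffeomorphism, these compact sets escape to infinity as $m\to\infty$. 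A diagonal argument, possibly sparsifying the $N_j$ in advance, then leaves infinitely many integers available to serve as $N_i$. Standard bump-function techniques, followed by renormalization, produce the $\tilde\rho_i$ with the required properties, and the lemma delivers the completeness of $H$.
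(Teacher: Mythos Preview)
Your strategy is exactly the paper's: glue the $H_i$ by a partition of unity on $E$ (not on $B$; your $\rho_i$ plays no role) arranged so that $H=H_i$ along an infinite family of tubes $T_i(n)$, $n\in N_i$, and then invoke the lemma. The one place where your write-up departs from the paper, and where it is not quite right as stated, is the inductive choice of the sets $N_i$.

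You fix all of $N_j$ before passing to $N_i$ for $i>j$, so at step $i$ you must dodge infinitely many tubes $T_j(m)$, $m\in N_j$, already laid down over each neighboring $U_j$. Their images in $\phi_i$-coordinates do escape to infinity, but the $h$-height of a single $T_j(m)$ may spread over an interval whose length is uncontrolled (the transition diffeomorphisms can stretch level sets of $h$ arbitrarily), and the union of these intervals over $m\in N_j$ can cover every sufficiently large integer, leaving no room for $N_i$. ``Sparsifying the $N_j$ in advance'' does not resolve this without circularity: the required sparseness of $N_j$ depends on the transitions to neighbors $i>j$, whose $N_i$ are not yet chosen. The paper's fix is to interleave: build the tubes one at a time, first one tube over each $U_i$, then a second tube over each $U_i$, and so on. At the moment of choosing the $k$-th tube over $U_i$, only finitely many previously built tubes can meet $p^{-1}(U_i)$ (by local finiteness and $k<\infty$); their intersection with $p^{-1}(U_i)$ has compact closure, $h\pi_2\phi_i$ attains a maximum there, and any integer above that maximum serves as the new radius. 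Your phrase ``diagonal argument'' may well be gesturing at exactly this, but it is an enumeration of the pairs $(i,k)$, not a recursion on $i$ alone.
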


\begin{proof}
The strategy will be to take a nice covering of $B$ by trivializing opens $(U_i,\phi_i)$, take an infinite union of tubes $T_i=\bigcup_{n\in N_i}T_i(n)$ over each $U_i$ in such a way that their closures do not intersect, and finally construct by using a partition of 1 a connection $H$ that over $T_i$ agrees with the induced connection $H_i$ by the local trivialization.

To start with, let $\{U_i:i\in\N\}$ be a countable open cover of $B$ such that (i) it is locally finite, (ii) each open $U_i$ has compact closure, and (iii) the closure of each open is contained in a trivialization $(V_i,\phi_i)$. The construction of such a cover is rather standard.

Next, we will define inductively the radius of our tubes, starting with a tube over $U_1$, then another other $U_2$, and so on, until we have constructed one tube over each open $U_i$. After that, we will construct a second tube over $U_1$, then a second tube over $U_2$, and so on. This process will end up providing infinitely many tubes over each open set $U_i$.

At the moment of constructing the $j$-th tube over $U_i$, the open $p^{-1}(U_i)$ can only intersect finitely many previously built tubes. The closure of this intersection will be compact, and the function $h\pi_2\phi_i$ will attain a maximum there. We can pick the radius of the new tube as the minimum integer bigger than that maximum.

Finally, set $\{\lambda_i\}_i$ a partition of 1 subordinated to $W_i=p^{-1}(V_i)\setminus\bigcup_{i\neq j} \bar T_j$
, and set 
$H=\sum_{i}\lambda_iH_i$,
where $H_i$ is the connection induced by $(U_i,\phi_i)$. If $x\in T_i$ and $j\neq i$ then $\lambda_j(x)=0$, and therefore $H=H_i$ over $T_i$. It follows that $\{\sigma_f:f\in h^{-1}(N_i)\}$ is a disconnecting family of horizontal sections over $U_i$, then by our criterion $H$ has to be complete.
\end{proof}


\begin{remark}
Let us mention two particular cases on which the problem admits a simple solution.
If the fiber $F$ is compact then the map $p:E\to B$ is proper and, therefore, any lift of a curve can be extended to the whole domain, and any connection is complete. This was already noted in \cite{E}. Other well-known case is when $p:E\to B$ is a principal bundle with Lie group $G$. In that case, if $H$ is constructed so as to be $G$-invariant, then the several local lifts of a curve can be translated by $G$ so as to agree in the intersections and define a global lift.
These arguments, however, are of little help when addressing the general case.
\end{remark}


\section{Fibered complete Riemannian metrics}


A {\it Riemannian submersion} $p:E\to B$ is a submersion between Riemannian manifolds such that the maps $\d p_e:T_eF^\bot\cong T_{p(e)}B$ is an isometry for all $e\in E$. 
As in \cite{dHF}, we will say that a metric on $E$ is {\it fibered} if the compositions $T_eF^\bot\cong T_bB\cong T_{e'}F^\bot$ are isometries for every pair $e,e'$ of points lying on the same fiber. A fibered metric on $E$ clearly induces a metric on $B$ that makes the submersion Riemannian.


A fundamental feature of Riemannian submersions is that the horizontal lifts of geodesics are geodesics. 
It follows that the exponential maps induce a commutative square as below, where the dash arrows are only defined around the zero section $0_F$ and the point $0_b$. 
$$\xymatrix@C=10pt{
T_bB\times F\ \ \ar@{}[r]|{\cong} \ar[dr]_{\pi_1} &
TF^\bot \ar@{-->}[rr]^{\exp} \ar[d]_{\d p} & & E \ar[d]^p \\
& T_bB \ar@{-->}[rr]^{\exp} & & B
}$$

If we happen to have a metric $\eta^E$ that is both fibered and complete, since $TF^\bot$ is trivial and $\d p$ identifies with the projection, we can get a local trivialization of $p$, as explained in the theorem below.
It is easy to see that every manifold admits a complete metric, and that every submersion admits a fibered metric, but imposing both conditions simultaneously is a more delicate issue, and in fact not always possible.


The construction of complete connections available in \cite{dM,KMS,M1,M2,M3} is based on a fibered complete metric, constructed as a convex combination of local fibered metrics. The problem, as explained in the introduction, is that fibered metrics are not closed under convex combinations. 
One can export from the dual bundle a convex structure on the set of fibered metrics, or define other ad hoc convex structures on this set, but then the required bounds used in that argument no longer hold.


Next we adapt our ideas to construct fibered complete metrics on any fiber bundle. 

\begin{theorem}\label{prop-part-proof}
Given $p:E\to B$ a submersion, the following are equivalent:
\begin{enumerate}[(i)]\itemsep=0pt
 \item $p$ is locally trivial;
 \item $p$ admits a complete connection $H$;
 \item there is a metric $\eta^E$ on $E$ that is both fibered and complete.
\end{enumerate}
\end{theorem}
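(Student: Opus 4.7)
The plan is to establish the three implications $(i) \Leftrightarrow (ii)$, $(iii) \Rightarrow (ii)$, and $(i) \Rightarrow (iii)$. The first pair is already at hand: $(i) \Rightarrow (ii)$ is Theorem~3 and $(ii) \Rightarrow (i)$ is the radial-lift argument sketched in the introduction. For $(iii) \Rightarrow (ii)$, I would take $H = F^\perp$ and observe that fiberedness makes $\d p$ a fiberwise isometry on $H$; hence every horizontal lift has exactly the length of its base projection, and completeness of $\eta^E$ forces a lift of finite length to extend to the endpoint, closing the standard argument for completeness of $H$.

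The heart of the proof is $(i) \Rightarrow (iii)$, and the plan is to adapt the construction of Theorem~3. First I would run that construction verbatim, obtaining a locally finite cover $(U_i, V_i, \phi_i)$, tubes $T_i = \bigcup_{n \in N_i} T_i(n)$ with pairwise disjoint closures, a partition of unity $\{\lambda_i\}$ subordinate to $W_i = p^{-1}(V_i)\setminus \bigcup_{j\neq i}\bar T_j$, and the complete connection $H = \sum_i \lambda_i H_i$. I would fix a complete Riemannian metric $g_B$ on $B$ and a complete Riemannian metric $g_F$ on the typical fiber $F$, and arrange the height function $h$ to be $1$-Lipschitz with respect to $g_F$. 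Then define $\eta^E$ by declaring $H \perp \ker\d p$, prescribing $\eta^E(v,w) = g_B(\d p(v), \d p(w))$ for $v,w \in H$, and setting the vertical part to be $\eta^E|_{\ker \d p} = \sum_i \lambda_i\,\phi_i^* g_F$. By construction this metric is fibered (the composition $T_e F^\perp \cong T_bB \cong T_{e'}F^\perp$ is the identity of $T_bB$), and on each tube $T_i$ it coincides with the pullback product metric $\phi_i^*(g_B \times g_F)$, since only $\lambda_i$ is nonzero there.

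The main obstacle is verifying completeness of $\eta^E$. The plan is to show that every Cauchy sequence $(e_n)$ converges: since $\d p$ is $1$-Lipschitz, $(p(e_n))$ is Cauchy in $(B,g_B)$ and converges to some $b \in B$; one picks $i$ with $b$ in the interior of a compact set $K \subset U_i$, so that $e_n \in p^{-1}(K)$ for $n$ large. The slab of $p^{-1}(K)$ between any two tubes $T_i(n)$ and $T_i(n')$ in $N_i$ is compact (as $h$ is proper and $K \subset V_i$ compact), and there $\eta^E$ is uniformly comparable with the product metric $\phi_i^*(g_B\times g_F)$; the $1$-Lipschitz property of $h$ then forces the $\eta^E$-distance between distinct tubes in $N_i$ meeting $p^{-1}(K)$ to grow at least linearly in $|n - n'|$. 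A Cauchy sequence has bounded diameter and so can only cross finitely many tubes, so it is eventually confined to a relatively compact region of $E$ and admits a convergent subsequence, hence converges. This closes $(i) \Rightarrow (iii)$ and completes the cycle.
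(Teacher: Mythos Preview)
Your treatment of $(i)\Leftrightarrow(ii)$ and $(iii)\Rightarrow(ii)$ is fine; the latter is in fact a different route from the paper, which proves $(iii)\Rightarrow(i)$ directly via the exponential map and \cite[Prop.~5.2.2]{dHF}, whereas you pass through $(ii)$. Your argument there is correct and arguably more elementary.

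The gap is in $(i)\Rightarrow(iii)$, in the completeness step. You assert that on the slab of $p^{-1}(K)$ between $T_i(n)$ and $T_i(n')$ the metric $\eta^E$ is ``uniformly comparable'' with $\phi_i^*(g_B\times g_F)$, and deduce that the $\eta^E$-distance between tubes grows linearly in $|n-n'|$. But the comparison constants you get from compactness depend on the slab, and there is no reason they stay bounded away from zero as the slab moves to infinity: on that slab finitely many $\lambda_j$ with $j\neq i$ may be nonzero, and the vertical metrics $\phi_j^*g_F$ can be arbitrarily small relative to $\phi_i^*g_F$ far out in the fiber. So a vertical curve could cross the $k$-th slab with $\eta^E$-length $\leq 2^{-k}$, producing a divergent Cauchy sequence. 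Your thin tubes $T_i(n)=\phi_i^{-1}(U_i\times h^{-1}(n))$ are hypersurfaces on which $\eta^E$ agrees with the product metric, but crossing a hypersurface costs no length, so they do not help.

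The paper's remedy is to replace the level-set tubes by \emph{thick tubes} $\tilde T_i(n)=\phi_i^{-1}(U_i\times h^{-1}(n,n+l_n))$, with $l_n$ chosen so that the $g_F$-distance across $h^{-1}(n,n+l_n)$ is at least $1$, and to build the disjoint family and the partition of unity from these. Then $\eta^E$ coincides with the product metric on each thick tube, so crossing it costs at least $1$ in $\eta^E$-length, and a unit-speed geodesic (or a Cauchy sequence) cannot traverse infinitely many of them in finite time. If you want to keep your pleasant feature of defining the horizontal part as the lift of $g_B$ along the connection $H$ of Theorem~3 (which makes fiberedness automatic), you should rerun the Theorem~3 construction with thick tubes from the outset, so that $H=H_i$ holds on all of $\tilde T_i$ and not merely on the thin level sets.
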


\begin{proof}
We have already shown (i)$\Leftrightarrow$(ii).
To show (iii)$\then$(i), let $0\in U\subset T_bB$ be an open over which the exponential of the induced metric $\eta^B$ is an open embedding. It follows from \cite[Prop. 5.2.2]{dHF} that the exponential of $\eta^E$ restricted to $\tilde U=\d p^{-1}(U)\cap TF^\bot$ is also an open embedding and hence it defines a trivialization of $p$ around $b$ (see also \cite{H}).

Let us prove (i)$\then$(iii). 
We construct a fibered metric on $E$ in the similar fashion we have constructed $H$ on the proof of the theorem. Set $\eta^B$ a complete metric on the base, and $\eta^F$ a complete metric on the fiber.
Define a family of tubes inductively as in that other proof, but now taking {\it thick tubes} $\tilde T_i(n)=\phi^{-1}(U_i\times h^{-1}(n,n+l_n))$, where $l_n$ is so as to insure that the distance between $h^{-1}(-\infty,n)$ and $h^{-1}(n+l_n,\infty)$ is at least 1.
Call $\tilde T_i=\bigcup_{n\in N_i}\tilde T_i(n)$.
We get a global metric by the convex combination $\eta^E=\sum_{i}\lambda_i\phi_i^*(\eta^B|_{U_i}\times\eta^F)$, where $\lambda_i$ is a partition of 1 subordinated to $W_i=p^{-1}(U_i)\setminus\bigcup_{i\neq j}  \tilde T_j$.
Next we see that it is complete. 

Let $\gamma:[t_0,t_1)\to E$ be a unit-speed geodesic, $t_0,t_1\in\R$.
The projection $p\gamma$ has speed bounded by one, then it is Lipschitz, and since $B$ is a complete metric space (Hopf-Rinow), there exists $b=\lim_n p\gamma(t_1-\frac{1}{n})=\lim_{t\to t_1}p\gamma(t)$. Let $(U_i,\phi_i)$ be one of the trivializations around $b$ used to construct $\eta^E$.
If $\gamma$ is included in some compact $K\subset E$ then there exists $e=\lim_{t\to t_1}\gamma(t)$ and the geodesic can be extended easily. 
If there is no such $K$, then the function $h\pi_2\phi_i$ cannot be bounded over the image of $\gamma$, and $\gamma$ has to cross infinitely many thick tubes $\tilde T_i(n)$ on finite time. Since $\eta^E$ and $\phi_i^*(\eta^B|_{U_i}\times\eta^F)$ agree over these tubes, $\gamma$ will need at least time 1 to cross each of them, which leads us to a contradiction.
\end{proof}


As mentioned in the introduction, a version of this characterization was already presented in \cite[Thm 3.6]{W}, where a recipe to build a fibered complete metric on a fiber bundle is given, though we have found it to be incorrect. A fibered metric can be deconstructed in three pieces of data: the induced connection, the metric on the vertical bundle, and the metric on the base. Starting with a complete metric on $E$, they build a new metric by preserving the induced connection and the vertical component, and replacing the horizontal component by the lift of a complete metric on $B$. Next we show with an example that the resulting metric need not to be complete, even if the induced connection is so.

\begin{example}
Let $a:\R\to[0,1]$ be smooth, $a(0)=1$ and $a(x)=0$ if $|x|\geq 1$. Let $b:(-1,1)\to\R$ be smooth, increasing, 
$b'(0)=0$ and $\lim_{x\to\pm 1}b(x)=\pm\infty$. 
Define $\phi_0:\R\times\R_{\geq 0}\to\R$ by
$\phi_0(x,y)=a(x-b(y-4)-4)$ if $3<y<5$ and 0 otherwise.
Construct now a sequence $\phi_k:\R\times\R_{\geq 0}\to\R$ by $\phi_k(x,y)=\phi(2^kx,2^ky)$. 
The supports of the $\phi_k$ are disjoint and hence $\phi=\sum_{k\geq0}\phi_k$ is well-defined and smooth.
The graph of $\phi$, that we denote $E\subset\R^3$, can be thought of as a chain of hills of height 1 approaching the $x$-axis.
Let $p:E\to\R$ be the first projection. 
It is easy to see that $E$, with the induced metric from $\R^3$, is complete. 
We claim that the connection $H$ induced by the metric is complete. Since $b'(0)=0$, the global sections $\{\sigma_k:t\mapsto (t,\frac{4}{2^k},\phi(t,\frac{4}{2^k}))\}_{k\in\Z}$ are horizontal, and restrict to a disconnecting family over each bounded interval.
Now construct a new metric $\eta^E$ on $E$ as in \cite{W}, preserving the induced connection and vertical component, and lifting to the horizontal distribution the standard metric on $\R$.
Then $\eta^E$ is fibered over a complete metric and its vertical component comes from a complete metric, but is not complete. In fact, if $c:(-5,0)\to\R$ is smooth, decreasing and such that $c(x)=\frac{4}{2^k}$ for $-\frac{5}{2^k}\leq x\leq -\frac{3}{2^k}$, 
then the curve $t\mapsto(t,c(t))$ has finite length and cannot be extended to $0$.
%
%
%
%
\end{example}

\frenchspacing

{\small

}

\bigskip

\sf{\noindent 
IMPA, Estrada Dona Castorina 110, Rio de Janeiro 22460-320, Brazil.\\
mdelhoyo@impa.br\\
http://w3.impa.br/$\sim$mdelhoyo/}

\end{document}